\documentclass[a4paper, 12pt]{article}

\usepackage[T2A]{fontenc}
\usepackage[cp1251]{inputenc}
\usepackage[english]{babel}
\usepackage{amsmath, amssymb, amsfonts, amsthm}

\usepackage{bxeepic}

\usepackage[text={175mm,260mm},nofoot,centering]{geometry}

\theoremstyle{plain}
\newtheorem{theorem}{Theorem}
\newtheorem{lemma}{Lemma}

\makeatletter
\renewcommand{\@oddhead}{\vbox{\hbox to\textwidth{\hfil\small The partial gossip problem revisited\hfil\thepage}\hrule}}%
\makeatother

\let\leq\leqslant
\let\geq\geqslant

\begin{document}

\title{The partial gossip problem revisited}

\author{O.~Bursian, K.~Kokhas}

\maketitle

\begin{abstract}
We present correct proof of Chung~G., Tsay Y.-J. result on partial gossip problem.
\end{abstract}


We consider the following variation of gossip problem.

Let $n$ and $k$ be arbitrary natural numbers, $2\leq k\leq n$. There are $n$ persons, each of whom knows a message (a gossip). A pair of persons can pass all messages they have by making a telephone call. The partial gossiping problem is to determine the minimum number of calls needed for all person to know at least $k$ messages. Denote this minimum number of calls by  $P(n,k)$.

The numbers $P(n,k)$ were found by Chung~G. and Tsay Y.-J., see \cite{ChungTsay}, where the following theorem is presented. Fix $k$ and consider finite sequence 
\begin{equation}
\label{eqn:seq}
t_i(k)=i+2^{k-i-2}, \quad \text{where } -1\leq i\leq k-4.
\end{equation}
We write often $t_i$ instead of $t_i(k)$ for short. The sequence $t_i$ decreases and satisfies the inequality $2t_i-i>t_{i-1}$.

\begin{theorem}\label{thm:chung-tsay}
{\rm 1)} $P(n,k)=\bigl\lceil\frac{2^{k-1}-1}{2^{k-1}}\cdot n\bigr\rceil$ for $n\geq 2^{k-1}-1=t_{-1}$.

{\rm 2)} If $t_i\leq n< t_{i-1}$, where $0\leq i\leq k-4$, then $P(n,k)= n+i$.
\end{theorem}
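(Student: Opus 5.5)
The proof splits into upper bounds by explicit constructions and lower bounds by counting arguments on the call graph. Throughout, view a calling scheme as a sequence of edges on the $n$ vertices.

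\emph{Part 1, upper bound.} Use the basic building block: a set of $2^{k-1}$ people (or $2^{k-1}-1$ plus help) can be brought to at least $k$ messages each cheaply. Concretely, take a tree on $m=2^{k-1}$ vertices, let information flow along it, and then call outward, so that each person ends with at least $k$ messages using $m-1$ calls. (The standard example is a binomial tree, where a vertex at depth $d$ collects $2^{\ldots}$ messages.) Partition the $n$ persons into blocks of size $2^{k-1}$, plus a remainder that is absorbed into blocks of size between $2^{k-1}$ and $2^k-1$ using one call per extra person. The result is $\lceil (2^{k-1}-1)n/2^{k-1}\rceil$ calls; the ceiling handles the remainder, and the condition $n\geq 2^{k-1}-1$ guarantees at least one block exists.

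\emph{Part 1, lower bound.} Consider the graph formed by the calls. Each connected component with $c$ vertices uses at least $c-1$ calls. A component that is a tree, i.e.\ has exactly $c-1$ calls, must have $c\geq 2^{k-1}$: in a tree scheme, some leaf's final knowledge comes only through its single call. More precisely, process the calls in order and show by induction that in a tree-scheme the first person to finish calling knows few messages. Consequently, every person knowing $\ge k$ messages forces the tree to have a vertex set of size at least $2^{k-1}$. Summing $c-1\geq c(1-2^{1-k})$ over tree components, and $c\ge c$ over non-tree components, gives the bound.

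\emph{Part 2.} Here $n<2^{k-1}-1$, so some component is not a tree. For the upper bound I would take a single connected scheme with $n+i$ calls, i.e.\ $i+1$ cycles' worth of redundancy. Let a core of $i+2$ (or so) people do a doubling gossip so that information amplifies by factor 2 per extra call. Then attach binomial-tree-like branches so that the total capacity equals $t_i=i+2^{k-i-2}$. The inequality $2t_i-i>t_{i-1}$ ensures the intervals nest correctly. For the lower bound, show that a connected scheme with $n+j$ calls (excess $j+1$ over a tree) can serve at most $t_{j-1}-1$ people; equivalently, $n\ge t_{j-1}$ forces at least $j+1$ extra calls. Disconnected schemes reduce to connected ones by merging, which the monotonicity and convexity of $t_i$ supports.

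\emph{Main obstacle.} The hard step is the lower bound in Part 2: bounding how many persons can reach $k$ messages when the call graph has cyclomatic number $j+1$. My first attempt is induction on the number of calls, removing the last call. Deleting the last call either reduces the excess by one or disconnects the graph. I would track the invariant ``the number of persons knowing fewer than $k$ messages,'' together with a weighting of messages by $2^{-\text{depth}}$, to obtain the exponential term $2^{k-i-2}$ while the linear term $i$ comes from the cycles. I expect this is exactly where the original Chung--Tsay argument was flawed, so the case analysis of the last call, especially when it joins two components, will need the most care.
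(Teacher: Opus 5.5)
\textbf{There is a genuine gap: the Part~2 lower bound is not proved.} This bound is the entire content the paper actually proves; Part~1 is only cited from Chung--Tsay. Your final paragraph announces an induction on deleting the last call, with an unspecified $2^{-\text{depth}}$ weighting, but it never derives an inequality.

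The target you formulate is also inverted. A connected scheme with $n+j$ calls does not ``serve at most $t_{j-1}-1$ people'': for $j=0$, a connected $k$-informing scheme with $n=2^{k-1}$ persons and $n$ calls exists. Moreover, larger $n$ needs fewer surplus calls, not more. The correct target is that a $k$-informing scheme with $n+j$ calls forces $n\geq t_j(k)$; equivalently, for $n<t_{i-1}(k)$ no scheme with $n+i-1$ calls works. Once this is proved for connected schemes, your reduction of disconnected ones via convexity of $t$ is fine. The difficulty lies entirely in that connected bound.

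Two concrete ingredients are missing.

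\emph{First, a bound for a tree preceded by extra calls.} Suppose $i$ arbitrary calls, possibly involving persons outside the tree, are followed by the calls of a tree $G'$, and all vertices of $G'$ end up $k$-informed. Then there are at least $t_{i-1}(k)=2^{k-i-1}+i-1$ persons (Lemma~\ref{lem:k -> k+l}). The paper proves this by gluing each over-informed person to his last partner until the tree is exactly informing. It then performs a case analysis of the first three tree calls (Lemma~\ref{lem:k -> k+l exact}). This machinery is what replaces the flawed lemma of Chung--Tsay.

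\emph{Second, an invariant that counts informed persons.} Deleting a call can destroy the $k$-informedness of its participants, so the induction cannot assume everyone is informed. The paper's invariant is: for $n\leq t_{i-1}(k)-1$, any $i+j$ calls leave at most $j$ persons informed (Lemma~\ref{lem:2}). This is proved by induction on $j$ together with a structural statement. After an equivalent reordering, the last $j$ calls form components that are either unicyclic with all vertices informed, or trees with exactly one uninformed vertex. Fully informed tree components are excluded by the first ingredient, and this structural half is indispensable for the inductive step. Taking $j=n-1$ gives $P(n,k)\geq n+i$.

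Your idea of tracking uninformed persons points in this direction. Without the split into $i$ leading and $j$ trailing calls, the structural statement, and the tree bound, the induction does not close.

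Minor points:
\begin{itemize}
\item Your Part~1 blocks do not exist for $n=2^{k-1}-1$. There a unicyclic scheme with $n$ calls is needed, for example Example~1 with $i=0$.
\item Your Part~2 construction is only gestured at. Example~1 makes it precise: $i$ calls into $y_1$, then doubling on $2^{k-i-2}$ persons starting from a $4$-cycle, then $y_1$ calls everyone outside $Y$. This uses exactly $n+i$ calls whenever $n\geq t_i$.
\end{itemize}
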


Both statements of the theorem are correct but unfortunately the proof of the second statement in \cite{ChungTsay} contains an error: an <<evident>> statement in parentheses in \cite[Lemma 3]{ChungTsay} is wrong, see comment after the proof of lemma~\ref{lem:1} below. In this paper, we prove the second statement of theorem \ref{thm:chung-tsay}. We follow the general lines of~\cite{ChungTsay} but gossip spreading graphs turned out to be more diversified.

\subsubsection*{Lemmas}

\emph{Communication scheme} is a finite sequence of calls between persons. A \emph{communication graph} is a multigraph whose vertices are persons and edges are calls. We may think that edges in communication graph are numbered chronologically. Communication schemes and graphs are denoted by the same letters. For each person $A$ we call gossip $A$ the message he knows at the beginning.

We say that a person is \emph{$\ell$-informed} if he knows at least $\ell$ messages when all the calls are completed, the number $\ell$ is called the \emph{awareness} of the person. A communication graph is \emph{$\ell$-informing} if it makes all persons $\ell$-informed. \emph{Exact $\ell$-informing} graph is a communication graph such that each person knows exactly $\ell$ messages when all the calls are completed. For any subset~$C$ of initial set of calls we may consider a subgraph of the communication graph generated by $C$, it contains edges corresponding to calls from $C$ and the set of its vertices is the set of endpoints of these edges. 

Connected graph in which the number of vertices equals to the number of edges is called \emph{unicyclic}.
Each unicyclic graph contains exactly one simple cycle or an edge of multiplicity 2 as degenerate case.

\begin{lemma}\label{lem:1}
Let $n$ be the number of persons in the communication scheme.

a{\rm)} Let $n\geq k\geq 1$. If communication graph is a $k$-informing tree then $n\geq 2^{k-1}$.

b{\rm)} Let $n\geq k\geq k'\geq 1$ and communication graph be a tree. Let one of the persons be $k'$-informed and the others be $k$-informed when all the calls are completed. Then
$$
n\geq (\underbrace{2^{k-2}+2^{k-3}+2^{k-4}+\ldots}_{\text{$k'-1$ \rm summands}}) + 1.
$$

с{\rm)} Let $n\geq k\geq 4$ and communication graph be unicyclic and $k$-informing. Then $n\geq 2^{k-2}$.
\end{lemma}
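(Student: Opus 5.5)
We prove a) and b) together by one induction on the number of calls. The bound in b) sums to an explicit formula: $\sum_{j=1}^{k'-1}2^{k-1-j}+1=2^{k-1}-2^{k-k'}+1$. For $k'=k$ this is exactly $2^{k-1}$, so a) is the special case of b) in which the exceptional person is not actually exceptional. It is convenient to put $f_k(x)=2^{k-1}-2^{k-x}+1$ for $1\le x\le k$. We then prove the following stronger claim by induction: in a tree communication scheme in which one vertex $S$ is $x$-informed and all others are $k$-informed, the tree has at least $f_k(x)$ vertices. The base case is the one-vertex tree with $x=1$, where $f_k(1)=1$.

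For the induction step we remove the \emph{last} call $e=AB$. Since the graph is a tree, $T-e$ splits into subtrees $T_A\ni A$ and $T_B\ni B$, and the final knowledge of a vertex of $T_A$ other than $A$ is produced by the calls of $T_A$ alone. Let $x_A$ and $x_B$ be the awareness of $A$ and $B$ just before $e$. After $e$ both $A$ and $B$ know $x_A+x_B$ messages, because the two message sets are disjoint, coming from different components.

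\emph{Case 1: $S\in\{A,B\}$ or $S$ does not exist.} Then $T_A$ is a tree with one exceptional vertex $A$, which is $x_A$-informed, and all others $k$-informed; similarly for $T_B$. The constraint is $x_A+x_B\ge k$, or $\ge x$ when the exceptional vertex is $A$ or $B$ and the other endpoint also needs only that amount. By induction $n\ge f_k(x_A)+f_k(x_B)$. Minimising $2^{k}+2-2^{k-x_A}-2^{k-x_B}$ under $x_A+x_B\ge x$ pushes to an extreme, $x_A=1$, $x_B=x-1$. This gives $1+2^{k-1}-2^{k-x+1}+1\ge f_k(x)$, as required.

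\emph{Case 2: $S$ lies strictly inside $T_A$.} Then $T_A$ has two non-$k$-informed vertices, $S$ and $A$, so the induction must really be set up with a slightly stronger hypothesis. The cleanest fix I would try is to show directly that $|T_A|\ge f_k(x)$ is impossible to violate. Either $A$ is itself $k$-informed inside $T_A$, in which case we apply the hypothesis to $T_A$ alone, or $x_A<k$, in which case the hypothesis applied to $T_B$ gives $|T_B|\ge f_k(k-x_A)$, and we combine this with a bound on $T_A$ as a tree with exceptional vertices. If this gets messy, the alternative is to induct on a leaf instead of the last call. Handling this two-exception case is the main obstacle for a) and b).

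For c) we reduce to b). Let $e=AB$ be the chronologically last call lying on the unique cycle, and let $T=G-e$, which is a spanning tree. The idea is that deleting $e$ costs each vertex at most half of its information. More precisely, I would show that every vertex of $T$ except possibly one is $(k-1)$-informed in $T$, with the exceptional one at least $(k-2)$-informed, and that the loss is concentrated on the side of $A$ or $B$ reached through later tree calls. Applying b) with $k-1$ and $k'=k-2$ gives
$$
n\ \ge\ 2^{k-2}-2^{2}+1 .
$$
To close the gap to $2^{k-2}$, we use $k\ge4$ and a more careful accounting: the messages duplicated across the cycle are counted twice in a tree bound. Alternatively, we ``unwind'' the cycle by splitting $e$ into two pendant edges, which produces a tree on $n+1$ or $n+2$ vertices to which a) or b) applies with parameter $k-1$. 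I expect the precise verification of which vertices lose information when $e$ is removed to be the delicate part of c).
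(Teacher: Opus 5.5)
Your proposal has genuine gaps, and they come from choosing to delete the \emph{last} call.

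In b), Case~2 is not handled. When $S\ne A$ lies in $T_A$, the subtree $T_A$ has two deficient vertices: $S$ (with $x$ messages) and $A$ (with only $x_A$, possibly far below $k$). Your one-exception hypothesis does not apply to it. The ``bound on $T_A$ as a tree with exceptional vertices'' that you want to combine with $|T_B|\ge f_k(k-x_A)$ is exactly the unproved statement, and repeating the last-call deletion only creates more deficient vertices. Since you obtain a) from b), neither part is established.

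Case~1 also needs repair. The correct constraint is $x_A+x_B\ge k$, because the non-exceptional endpoint must end with $k$ messages; it is not $x_A+x_B\ge x$. With $\ge x$, your inequality $2^{k-1}-2^{k-x+1}+2\ge f_k(x)$ is false for $x<k$, since the difference is $1-2^{k-x}$. With $\ge k$ you get $f_k(1)+f_k(k-1)=2^{k-1}\ge f_k(x)$, which is what you need.

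Part c) is only a sketch. The claim that deleting the last cycle call costs all vertices but one at most one message is unjustified and fails in general. The call $e$ can bring $A$ many messages that have no other time-respecting route to $A$, and $A$ can forward all of them to many vertices in later tree calls. Even if the claim held, b) with parameters $k-1,k-2$ gives $2^{k-2}-1$ (not $2^{k-2}-3$), which is still short of $2^{k-2}$, and the closing step is left vague.

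The missing idea, which is what the paper uses, is to delete the \emph{first} call $AB$. At that moment $A$ and $B$ know only their own gossips, and any chain of calls through $AB$ must begin with it. So every person loses at most one message: if gossip $A$ reaches $P$ only along $A\to B\to\cdots\to P$, the same chain without its first edge still brings gossip $B$ to $P$.

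\begin{itemize}
\item[a)] In a tree, both components are therefore $(k-1)$-informing trees, and induction on $k$ gives the bound.
\item[b)] The component not containing $S$ is a $(k-1)$-informing tree, so it has at least $2^{k-2}$ vertices by a). In the component containing $S$, the person $S$ is $(k'-1)$-informed and the others are $(k-1)$-informed, so induction on $k$ supplies the remaining $k'-2$ summands plus $1$. The degenerate case where this component is just $\{S\}$ forces $k'\le 2$ and is allowed. No new deficient vertex ever appears.
\item[c)] The same deletion leaves either a spanning tree (if $AB$ lies on the cycle) or a tree plus a unicyclic component. In both cases there is a $(k-1)$-informing tree component, which has at least $2^{k-2}$ vertices by a).
\end{itemize}
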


\proof
Observe that lemma statement allows degenerate case $k=1$ (or $k'=1$) when the tree consists of one vertex only.

a) Induction by $k$. If we remove the first call, the tree splits onto two components, which are $(k-1)$-informing trees. Apply the induction hypothesis to these components.

b) Induction by $k$, too. If we remove the first call, the tree splits onto two componets. One of them is a $(k-1)$-informing tree. In the other component one person is $(k'-1)$-informed and the others are $(k-1)$-informed. Applying statement a) to the first component and the induction hypothesis to second we obtain the proof of induction step.

c) When we remove the first call, the obtained graph consists of one or two components, one of which is a $(k-1)$-informing tree. By the statement a) this component has at least $2^{k-2}$ vertices.
\endproof

Lemma \ref{lem:1}\,а) is \cite[Lemma 3]{ChungTsay}, we cite the proof from this article. Lemma \ref{lem:1}\,b) is a corrected second statement of \cite[Lemma 3]{ChungTsay}. The original proof contained a mistake: if one of the graph components after removal of the first call contains only one vertex, namely the less informed person, we can not use $k$ in the induction step correctly as the <<awareness rate>> of persons in that component. Due to this inaccuracy the authors of \cite{ChungTsay} missed variable examples of communication schemes such as examples 2 and 3 below and fig.~\ref{ris:kk-method}, \ref{ris:kk-ext-method}.

\smallskip
In the next lemmas we call the calls of a given communication scheme $G'$ \emph{base} and we enlarge scheme $G'$ by appending several \emph{preliminary} calls that should be performed before the main calls. We denote the obtained communication scheme by $G$ and call it \emph{enlarged communication scheme}.

\begin{lemma}\label{lem:prirost_osved}
Let enlarged communication scheme $G$ be obtained by appending $\ell$ preliminary calls to a communication scheme $G'$. Then awareness of each person has increased at most by $\ell$.
\end{lemma}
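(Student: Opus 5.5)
We argue by induction on the number $\ell$ of preliminary calls. For $\ell=0$ there is nothing to prove. Since appending $\ell$ preliminary calls can be done one at a time, it suffices to show that appending a \emph{single} preliminary call before all calls of a scheme increases the awareness of each person by at most one. Indeed, if $G_j$ denotes $G'$ with the first $j$ preliminary calls prepended (taking them in reverse chronological order, so each new one is placed at the very beginning), then each passage $G_{j}\to G_{j+1}$ adds one initial call. The one-call statement applied $\ell$ times gives the bound $\ell$.

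So let the new call be between persons $A$ and $B$ and precede everything. For each person $X$ and each moment $t$ (after the $t$-th base call), let $S_t(X)$ be the set of gossips $X$ knows in $G'$ and $S^*_t(X)$ the set in the enlarged scheme. The key claim, proved by induction on $t$, is that
$$
S^*_t(X)\subseteq S_t(X)\cup\{a\},
$$
where $a\in\{A,B\}$ is a single fixed gossip depending on $X$ and $t$. It is cleaner to state this as follows. Let $R_t$ be the set of persons who in $G'$ already know gossip $A$ or gossip $B$ after time $t$. Then $S^*_t(X)=S_t(X)$ if $X$ knows neither $A$ nor $B$ in $G'$. Otherwise $S^*_t(X)=S_t(X)\cup\{A,B\}$.

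Proof of the claim: at $t=0$ it holds, since $S^*_0(A)=S^*_0(B)=\{A,B\}$ and $A,B$ are exactly the persons knowing $A$ or $B$. For the step, a call between $X$ and $Y$ sets both sets to the union in both schemes. The union of the right-hand sides is $S_t(X)\cup S_t(Y)$, augmented by $\{A,B\}$ exactly when one of $X,Y$ knew $A$ or $B$ in $G'$, that is, exactly when the merged set $S_{t+1}$ contains $A$ or $B$. So the description is preserved.

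Consequently, at the end $|S^*(X)|-|S(X)|=|\{A,B\}\setminus S(X)|$ whenever $S(X)$ meets $\{A,B\}$, and this is at most $1$. Otherwise the difference is $0$. This gives the one-call statement, and hence the lemma. The only delicate point is choosing the right invariant. The naive bound ``$S^*\subseteq S\cup\{A,B\}$'' would give an increase of up to $2$; the sharper form above is what is needed, and it is routine once stated.
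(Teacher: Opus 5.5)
Your proof is correct, but it takes a different route from the paper.

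The paper argues globally. Suppose a person gained $\ell+1$ new gossips, and choose for each a shortest time-respecting path delivering it. The gossip is new and all preliminary calls precede the base calls, so every such path begins with a preliminary call. By pigeonhole, two paths (for gossips $B_i$, $B_j$) begin with the same call, which must be the call $B_iB_j$. Exchanging tails then yields a shorter path for one of them; the paper compresses this last step into ``impossible due to minimality''. In effect the paper builds an injection from new gossips into preliminary calls incident to their owners.

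You instead peel the preliminary calls off one at a time, prepending them in reverse chronological order, so the one-call claim is applied to an arbitrary scheme at each step. For a single prepended call $AB$ you prove an exact invariant: the enlarged knowledge set equals the original one, with $\{A,B\}$ adjoined precisely when the original set meets $\{A,B\}$.

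What each route buys:
\begin{itemize}
\item Your argument is completely elementary and needs no minimality or exchange reasoning. Your invariant is essentially the ``gossips $P$ and $Q$ travel together, one of them is old'' phenomenon that the paper later uses in the proof of Lemma~\ref{lem:k -> k+l exact}.
\item The paper's argument treats all $\ell$ calls at once. It also gives the extra structural information that distinct new gossips are charged to distinct preliminary calls.
\end{itemize}

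A small cleanup: your first formulation (a single gossip $a$ depending on $X$ and $t$) and the set $R_t$ are never used. The precise invariant you state afterwards is what carries the proof.
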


\proof 
Let the number of messages known to person $A$ after all communications of scheme $G'$ is by $\ell+1$ more (due to preliminary calls) than those in scheme $G$. Then at the beginning these additional gossips were known to some persons $B_1$, \dots, $B_{\ell+1}$, who participated in the preliminary calls. For each message $B_i$ consider the path in the communication graph which delivers this message to $A$ (if there are several paths, choose one of the shortest paths). It is clear that each path starts with a preliminary call. Since we have $\ell +1$ paths, two of them start with the same preliminary call. This is impossible due to minimality.
\endproof

An increase of persons' awareness by $\ell$ as a result of $\ell$ preliminary calls is possible even for the case where the initial communication scheme was a $k$-informing tree. For $\ell=1$ this tree can even be \emph{minimal}, i.e., contain $2^{k-1}$ vertices (see Fig.~\ref{ris:kk-derevo+1sluh}). As the following lemmas show, for $\ell>1$ the tree can no longer be minimal  (see Fig.~\ref{ris:kk-derevo+2sluha}).

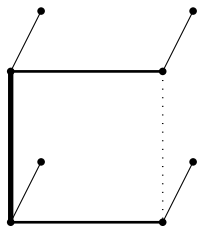
\begin{figure}[t]
\footnotesize 
\begin{center}
\setlength{\unitlength}{1mm}
\qquad\begin{picture}(95,35)(70,-26)
\multiput(70,0)(20,0){2}{\multiput(0,0)(0,-20){2}{\circle*{1}}}
\matrixput(74,8)(20,0){2}(0,-20){2}{\circle*{1}}
\linethickness{.4pt}
\matrixput(70,0)(20,0){2}(0,-20){2}{\put(0,0){\line(1,2){4}}}
\dottedline{1}(90,0)(90,-20)
\linethickness{1pt}
\multiput(70,0)(0,-20){2}{\put(0,0){\line(1,0){20}}}
\linethickness{2pt}
\put(70,0){\line(0,-1){20}}
\end{picture}
\end{center}
\caption{\vtop{\hsize=370pt 
In this example the communication graph is a minimal tree (solid lines, the thicker line the earlier call), that makes all persons $k$-informed (${k=4}$). Adding one preliminary call (dash line) makes the persons $(k+1)$-informed.
}}
\label{ris:kk-derevo+1sluh}
\end{figure}

\begin{figure}[t]
\footnotesize
\begin{center}
\setlength{\unitlength}{.7mm}
\qquad\qquad\qquad
\begin{picture}(90,45)(30,-26)
\matrixput(70,0)(20,0){2}(0,-20){2}{\circle*{2}}
\matrixput(74,8)(20,0){2}(0,-20){2}{\circle*{2}}
\matrixput(50,0)(0,-20){2}(-20,0){2}{\circle*{2}}
\linethickness{.4pt}
\matrixput(70,0)(20,0){2}(0,-20){2}{\put(0,0){\line(1,2){4}}}
\dottedline{1}(70,0)(70,-20)
\dottedline{1}(50,0)(50,-20)
\linethickness{1pt}
\matrixput(30,0)(40,0){2}(0,-20){2}{\put(0,0){\line(1,0){20}}}
\linethickness{2pt}
\multiput(50,0)(0,-20){2}{\put(0,0){\line(1,0){20}}}
\linethickness{3pt}
\put(30,-20){\line(0,1){20}}
%
\end{picture}
\qquad
\begin{picture}(90,65)(30,-40)
\matrixput(70,0)(20,0){2}(0,20){2}{\circle*{2}}
\matrixput(50,0)(0,-20){3}(-20,0){2}{\circle*{2}}
\linethickness{.4pt}
\multiput(70,0)(0,20){2}{\put(0,0){\line(1,0){20}}}
\multiput(30,-20)(20,0){2}{\put(0,0){\line(0,-1){20}}}
\put(30,0){\line(1,0){20}}
\dottedline{1}(30,0)(70,20)
\dottedline{1}(50,0)(50,-20)
\linethickness{1pt}
\put(70,0){\line(0,1){20}}
\linethickness{2pt}
\multiput(30,-20)(20,20){2}{\put(0,0){\line(1,0){20}}}
\linethickness{3pt}
\put(30,-20){\line(0,1){20}}
%
\end{picture}
\end{center}

\caption{\vtop{\hsize=370pt 
In these two exmples the communication graphs are trees (solid lines, the thicker line the earlier call), that makes all persons $k$-informed (${k=4}$). Adding two preliminary calls (dash lines) makes the persons $(k+2)$-informed.
}}
\label{ris:kk-derevo+2sluha}
\end{figure}

\begin{lemma}\label{lem:k -> k+l exact}
Let $n> k\geq 3$, $\ell \geq 1$. Let $G'$ be an exact $k$-informing tree on $n$ vertices. Let the persons make $\ell$ preliminary calls, and after that make all calls according to scheme $G'$. Assume that these calls make all the persons $(k+\ell)$-informed. Then
$$
n\geq t_{\ell-1}(k+\ell)=2^{k-1}+\ell-1.
$$
\end{lemma}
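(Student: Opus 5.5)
The bound to aim for is
$$
n\geq 2^{k-1}+\ell-1 = t_{\ell-1}(k+\ell).
$$
I would prove it by induction on $\ell$, with Lemma~\ref{lem:1}\,a) as the base and Lemma~\ref{lem:prirost_osved} (together with the path argument in its proof) as the main tool.

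For $\ell=1$ the claim is $n\geq 2^{k-1}$. This follows directly from Lemma~\ref{lem:1}\,a), since $G'$ is a $k$-informing tree.

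For the induction step, suppose everyone is $(k+\ell)$-informed in $G$ and $\ell\geq 2$.
\begin{itemize}
\item Delete the first preliminary call $p$. In the resulting scheme, the remaining $\ell-1$ preliminary calls together with $G'$ are preceded only by the single call $p$.
\item By the argument of Lemma~\ref{lem:prirost_osved}, each awareness drops by at most one. So everyone is still $(k+\ell-1)$-informed.
\item The induction hypothesis then gives $n\geq 2^{k-1}+\ell-2$.
\end{itemize}
It therefore remains to exclude the equality case $n=2^{k-1}+\ell-2$. Here I would use the fact that awareness rises by exactly $\ell$ for every person. Since $G'$ is exact, every person $A$ knows precisely $k$ messages in $G'$ and precisely $k+\ell$ messages in $G$. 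By the proof of Lemma~\ref{lem:prirost_osved}, the $\ell$ extra messages reaching $A$ arrive along $\ell$ shortest paths, and these paths start with $\ell$ \emph{distinct} preliminary calls. So for every $A$ there is a bijection between the preliminary calls and the new gossips of $A$; in particular every preliminary call is "useful" to every person.

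Next I would look at the two endpoints $X,Y$ of the last preliminary call. Consider the messages that $X$ and $Y$ bring into $G'$, namely their own gossips plus whatever the earlier preliminary calls gave them. Since the call involves both $X$ and $Y$, each of the two gossips must travel through the tree $G'$ to every vertex that does not already receive it there. Exactness of $G'$ means the tree delivers to each vertex exactly $k$ gossips. I would argue, as in the proof of Lemma~\ref{lem:1}\,a) by removing the first base call, that this forces the tree to contain a $(k-1)$-informing subtree disjoint from a set of at least $\ell-1$ vertices. These extra vertices are the ones that serve as the "sources" of the preliminary gossips that no vertex of the subtree knows in $G'$. This yields $2^{k-1}+(\ell-1)$ vertices.

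The main obstacle is precisely this strictness step: showing that the $\ell-1$ additional preliminary gossips cannot all be carried by vertices of a minimal configuration. Equivalently, for $\ell\geq 2$ the tree $G'$ cannot have the minimal size permitted after one fewer preliminary call, in the spirit of Fig.~\ref{ris:kk-derevo+2sluha}. My first attempt would be a counting argument on the first base call. Removing it splits $G'$ into two trees $T_1,T_2$, each $(k-1)$-informing. Then I would check how the $\ell$ preliminary calls distribute among $T_1$ and $T_2$, and apply induction on $k$ (with Lemma~\ref{lem:1}\,a) and b) for the components receiving no preliminary calls) to each part. The case where all preliminary calls lie inside one component and the other component is a minimal tree with $2^{k-2}$ vertices needs separate care. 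There, exactness prevents the preliminary gossips from crossing the first base call more than once, and I expect this to force the extra $\ell-1$ vertices.
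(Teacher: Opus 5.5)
\textbf{Verdict: genuine gap.} Your base case and your weak inductive bound are correct. Deleting the first preliminary call costs every person at most one message (apply Lemma~\ref{lem:prirost_osved} with the remaining calls as the base scheme), so the induction hypothesis gives $n\geq 2^{k-1}+\ell-2$. But that is one short of the claim. Everything nontrivial lies in the strictness step, which you leave as an expectation rather than an argument.

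Your sketch of that step does not reach the target even on its own terms. Lemma~\ref{lem:1}\,a) guarantees a $(k-1)$-informing subtree only $2^{k-2}$ vertices, so such a subtree plus $\ell-1$ further vertices gives $2^{k-2}+\ell-1$, not $2^{k-1}+\ell-1$. Your fallback plan (split at the first base call and induct on $k$) also fails as stated, for two reasons:
\begin{itemize}
\item The halves $T_1,T_2$ are not exact $(k-1)$-informing. A vertex of $T_1$ that never learns gossip $A$ receives nothing from $T_2$, so it knows $k$ messages from $T_1$ alone.
\item Preliminary calls may join $T_1$ to $T_2$.
\end{itemize}
So the hypotheses of the lemma are not inherited by the parts.

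The missing idea is structural, not inductive.
\begin{enumerate}
\item From the fact that every person gains exactly $\ell$ new messages, one shows that the graph $L$ of preliminary calls is a matching. A component with $\ell'$ edges gives any person at most $\ell'$ new gossips, so each component is a tree. A tree with at least $2$ edges has a leaf outside its last call, and that leaf gains too little.
\item For each edge $PQ$ of $L$, both gossips $P$ and $Q$ reach every vertex.
\item Take the first base call $AB$ and the first calls $CD$, $EF$ inside $\mathcal A$ and $\mathcal B$. The four subtrees $\mathcal A_C,\mathcal A_D,\mathcal B_E,\mathcal B_F$ are linked only through these three calls, so every edge of $L$ must touch one of $A,\dots,F$.
\item A short case analysis then gives $\ell\leq 3$. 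In other words, the hypotheses cannot be satisfied for $\ell\geq 4$, which your induction on $\ell$ never detects.
\item The bound then follows by direct counting.
\begin{itemize}
\item When $A\neq C$ (Cases 1 and 2, up to the symmetry $\mathcal A\leftrightarrow\mathcal B$), the disjoint trees $\mathcal B$ and $\mathcal A_D$ are $(k-1)$-informing, and $A$, $C$ lie outside them. This gives $n\geq 2^{k-1}+2\geq 2^{k-1}+\ell-1$.
\item When $A=C$, $B=E$ and $\ell\geq 2$, one shows $L=\{AX,BY\}$ with $X,Y$ adjacent in $G'$, and extracts one extra vertex.
\end{itemize}
\end{enumerate}
Without an argument of this kind, your proposal proves only $n\geq 2^{k-1}+\ell-2$.
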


\proof
Denote by $L$ the graph of preliminary calls.

Prove that graph $L$ is a union of non-intersecting edges. By lemma~\ref{lem:prirost_osved} after adding of $\ell$ preliminary calls the awareness of any person increases at most by $\ell$, therefore the final $(k+\ell)$-awareness is exact, too:
\begin{equation}
\label{eqn:rovno po i}
\text{due to preliminary calls each person learns exactly $\ell$ \emph{new} gossips.}
\end{equation}

Let $A$ be an arbitrary person, $L'$ be a connected component of graph $L$, $\ell'$ be the number of edges in $L'$. Then $L'$ contains at most $\ell'+1$ vertices, i.e. at most $\ell'+1$ gossips are circulating in all the calls of the component $L'$. Then in the enlarged communication scheme person $A$ learned at most $\ell'$ new gossips from $L'$. Indeed, person $A$ can learn a new gossip from component $L'$ only if either $A$ belongs to $L'$ or the gossip from $L'$ was delivered to $A$ by the chain of calls from $G'$. In the latter case the gossip in $L'$ nearest to $A$ (in that chain) is not new for $A$. And so in both cases $A$ learns at most $\ell'$ new gossips.

Thus, for each person the number of his new gossips from any component of graph $L$ does not exceed the number of edges in this component. Due to \eqref{eqn:rovno po i} we conclude that each person learned from each connected component of $L$ exactly as many new gossips as the number of edges in it. Therefore the number of vertices in each component equals the number of edges plus 1, i.e. each component is a tree. It remains to observe that if a tree contains more than 1 edge then there exists a person in a leaf vertex that did not take part in the last call in this tree. Then this person learned less than $\ell'$ new gossips from this tree, a contradiction. Thus, each connected component of graph $L$ is a separate edge.

Let's note one more detail of gossips spreading. Let edge $PQ$ be a connected component of $L$. For each person in $G$ one of gossips $P$ and $Q$ is new and the other is ``old''. Hence gossips $P$ and $Q$ both spread throughout graph $G$ during the enlarged communication scheme.

Now we will show that the conditions of lemma are possible for $\ell\leq 3$ only. Let us introduce some notations. Let $A$ and $B$ be the persons who made the first base call in $G'$. Let after removing of edge $AB$ graph $G'$ split into two trees $\mathcal A$ and $\mathcal B$, where $A\in\mathcal A$, $B\in\mathcal B$ (see Fig.~\ref{ris:kk_B=E}a). Let $C$ and $D$ be the persons who made the first base call in~$\mathcal A$ (since $k\geq 3$ these persons exist), and $E$ and $F$ be the persons who made the first base call in~$\mathcal B$ (for the sake of definiteness let $C$ is closer to $A$ than $D$ and $E$ is closer to $B$ than $F$). By $\mathcal A_C$ and $\mathcal A_D$ we denote the sub-trees obtained from tree $\mathcal A$ by removing edge $CD$, and by $\mathcal B_E$ and $\mathcal B_F$ we denote the sub-trees obtained from tree $\mathcal B$ by removing edge $EF$ The sub-trees $\mathcal A_C$, $\mathcal A_D$, $\mathcal B_E$, $\mathcal B_F$ are connected in graph $G'$ by edges  $AB$, $CD$, $EF$ only. We consider several cases.

1. The six persons mentioned above are different (Fig.~\ref{ris:kk_B=E}a). Then each edge of graph $L$ is incident to one of vertices  $A$, $B$, $C$, $D$, $E$, $F$. Indeed, if edge $XY$ of graph $L$ contains none of the six vertices above, then gossips $X$ and $Y$ can not spread throughout all four components $\mathcal A_C$, $\mathcal A_D$, $\mathcal B_E$, $\mathcal B_F$ since these gossips were not pass in calls $AB$, $CD$, $EF$. Moreover, if $X$ and/or $Y$ coincides with one of $A$, $B$, $C$, $D$, $E$, $F$, then at most two of edges $AB$, $CD$, $EF$ were used for spreading gossips $X$ and $Y$ and hence these calls can spread throughout all four components only if $X\in \{C,D\}$, $Y\in\{E,F\}$ (or vice versa). Thus in the case under consideration graph $L$ contains at most two edges.

\begin{figure}[h]
\footnotesize
\begin{center}
\setlength{\unitlength}{.7mm}
\begin{picture}(80,68)(14,-42)
\matrixput(70,0)(0,-20){2}(-20,0){3}{\circle*{2}}
\allinethickness{.4pt}
\matrixput(0,-3)(43,0){2}(0,-34){2}{\put(37,10){\oval(32,25)}}
\put(58,8){\oval(94,32)}\put(58,-28){\oval(94,32)}
\linethickness{2pt}
\multiput(50,-20)(0,20){2}{\put(0,0){\line(1,0){20}}}
\linethickness{3pt}
\put(30,-20){\line(0,1){20}}
\put(23,3){$A$}\put(23,-25){$B$}
\put(45,3){$C$}\put(45,-25){$E$}
\put(70,3){$D$}\put(71,-25){$F$}
\put(13,14){$\mathcal A_C$}\put(14,-40){$\mathcal B_E$}
\put(96,14){$\mathcal A_D$}\put(95,-40){$\mathcal B_F$}
\put(5,3){$\mathcal A$}\put(5,-22){$\mathcal B$}
\end{picture}
\hfil\hfil\hfil
\begin{picture}(80,65)(14,-40)
\matrixput(70,0)(0,-20){2}(-40,0){2}{\circle*{2}}
\put(50,0){\circle*{2}}
\allinethickness{.4pt}
\matrixput(0,-3)(43,0){2}(0,-34){2}{\put(37,10){\oval(32,25)}}
\put(58,8){\oval(94,32)}\put(58,-28){\oval(94,32)}
\linethickness{2pt}
\put(30,-20){\line(1,0){40}}\put(50,0){\line(1,0){20}}
\linethickness{3pt}
\put(30,-20){\line(0,1){20}}
\put(23,3){$A$}\put(23,-25){$B=E$}
\put(45,3){$C$}
\put(70,3){$D$}\put(71,-25){$F$}
\put(13,14){$\mathcal A_C$}\put(14,-40){$\mathcal B_E$}
\put(96,14){$\mathcal A_D$}\put(95,-40){$\mathcal B_F$}
\put(5,3){$\mathcal A$}\put(5,-22){$\mathcal B$}
\end{picture}
\end{center}

\centerline{\hfill a) Case 1\hfill\hfill b) Case 2\hfill}
\caption{First calls in $k$-informing tree}
\label{ris:kk_B=E}
\end{figure}
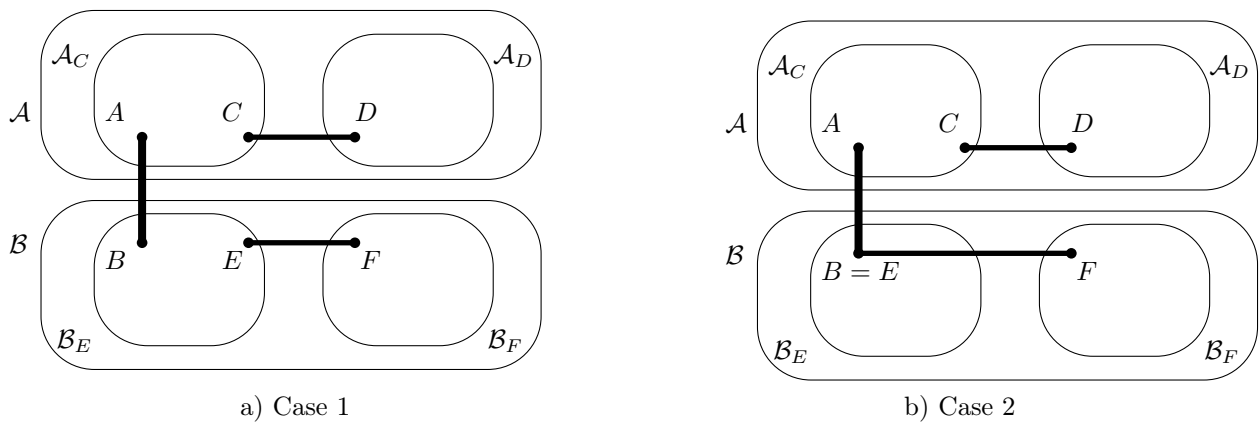

2. $B=E$, $A\ne C$ (the case $A=C$, $B\ne E$ is similar). In this case each edge of graph $L$ is incident to one of vertices  $A$, $B$, $C$, $D$, $F$. 

Subcase 1. Graph $L$ does not contain edge $AC$. If $L$ contains an edge $CX$ or $DX$, then $X\in\{B,F\}$ because otherwise the new gossip of this edge does not go through bridges $AB$, $BF$ and can not spread throughout the whole graph. If $L$ contains an edge $AZ$, $BZ$ or $FZ$, then $Z\in \{C\}\cup\mathcal A_D$ since otherwise the new gossip from this edge does not arrive to $\mathcal A_D$. From the other hand, if $Z\in \mathcal A_D\setminus\{D\}$, then $C$ does not learn gossip $Z$. Hence $Z\in \{C,D\}$ for edges under discussion. Thus graph $L$ contains at most 2 edges in this subcase.

Subcase 2. Graph $L$ contains edge $AC$. If $L$ contains an edge $DX$, then $X\in\{B,F\}$, since otherwise the new gossip does not arrive to $\mathcal B_F$. Now only one vertex remains, that can be incident with edge of graph $L$, i.e. $L$ has at most 3 edges. And if $L$ does not contain edge $DX$, then vertices $B$ and $F$ remain and $L$ has at most 3 edges once again.

3. $A=C$, $B=E$. By analogous reasoning we conclude that $L$ can only have edges $DF$, $AX$ and $BY$ for some $X$, $Y$.
Thus graph $L$ contains at most 3 edges in this case.

\begin{figure}[h]
\footnotesize
\begin{center}
\setlength{\unitlength}{.7mm}
\begin{picture}(80,65)(14,-40) 
\matrixput(70,0)(0,-20){2}(-40,0){2}{\circle*{2}}
\allinethickness{.4pt}
\matrixput(0,-3)(43,0){2}(0,-34){2}{\put(37,10){\oval(32,25)}}
\put(58,-28){\oval(94,32)}\put(5,-22){$\mathcal B$}
\linethickness{2pt}
\put(30,-20){\line(1,0){40}}\put(30,0){\line(1,0){40}}
\linethickness{3pt}
\put(30,-20){\line(0,1){20}}
\put(23,3){$A=C$}\put(23,-25){$B=E$}
\put(70,3){$D$}\put(71,-25){$F$}
\put(13,14){$\mathcal A_C$}\put(14,-40){$\mathcal B_E$}
\put(96,14){$\mathcal A_D$}\put(95,-40){$\mathcal B_F$}
\end{picture}
\hfil\hfil\hfil
\begin{picture}(80,65)(14,-40) 
\matrixput(70,0)(0,-20){2}(-40,0){2}{\circle*{2}}
\allinethickness{.4pt}
\matrixput(0,-3)(43,0){2}(0,-34){2}{\put(37,10){\oval(32,25)}}
%
\put(24,3){$ABD$}\put(24,-26){$ABF$}
\put(70,3){$ABD$}\put(70,-26){$ABF$}
\put(40,12){\circle*{2}}\put(41.5,13){$X$}
\put(13,14){$\mathcal A_C$}\put(14,-40){$\mathcal B_E$}
\put(96,14){$\mathcal A_D$}\put(95,-40){$\mathcal B_F$}
\end{picture}
\end{center}

\centerline{\hfill a) \hfill\hfill b) After the first 3 calls\qquad\qquad}
\caption{Case 3}
\label{ris:case3}
\end{figure}

It remains to prove lemma for $\ell\leq 3$. In cases 1 and 2 (Fig.~\ref{ris:kk_B=E}a,b) subgraphs $\mathcal B$ and $\mathcal A_D$ are $(k-1)$-informing trees with at least $2^{k-2}$ vertices in each of them by Lemma~\ref{lem:1}a. Besides that, graph $G'$ contains vertices $A$ and $C$, hence
\begin{equation}
n\geq 2^{k-1}+2, 
\label{eq:min plus2}
\end{equation}
this inequality is stronger than the desired one. In case 3, if $L$ contains one edge only, the desired inequality coincides with the inequality of Lemma~\ref{lem:1}a.

We now need to analyze the part of Case 3 where the scheme $L$ contains 2 or 3 calls. Without loss of generality, we can assume that one of them is the call $AX$. As a result of this call, person~$X$ must learn a new message from $A$ (new compared to what he learns from the calls in~$G'$). Therefore, in scheme $G'$, gossip $A$ (as well as gossip~$B$) did not reach vertex $X$. Similarly, gossip~$X$ did not reach vertices $A$ and $B$.

Let us consider the distribution of messages after the first three calls of scheme $G'$ in Case 3 (see Fig.~\ref{ris:case3}b; the messages known to each are written near the highlighted vertices). For definiteness, we have placed vertex $X$ in subgraph $\mathcal A_C$ (the reasoning is analogous for other subgraphs). In this component, messages from vertex ``ABD'' do not reach vertex $X$.

In communication scheme $L$, there is at least one more preliminary call — $BY$ and/or $DF$. Note that preliminary call $DF$ is impossible in this situation because vertex $X$ would not be able to learn anything new from it. Hence, $\ell=2$ and scheme $L$ contains only the calls $AX$ and $BY$.

Similar to vertex $X$, vertex $Y$ did not receive messages from $A$ and $B$ in scheme $G'$. However, vertex $X$ must learn a new message during the call $BY$. Gossip $Y$ cannot be the new one because it is ``attached'' to gossip $B$, and gossip $B$ does not reach $X$ in scheme $G'$. Therefore, this new message is gossip $B$, and vertex $X$ can learn this message only if messages from $Y$ reach $X$ in scheme~$G'$. Similarly, messages from $X$ reach $Y$ in scheme $G'$ as well. This is possible only if vertices $X$ and $Y$ are adjacent in graph $G'$ (in which case vertex $Y$ is also located in component~$\mathcal A_C$).

Now, components $\mathcal A_C$ and $\mathcal B$ are $(k-1)$-awareness trees, and by Lemma~\ref{lem:1}a, each contains at least $2^{k-2}$ vertices. Additionally, there is vertex $D$, which has not been counted in these totals. Together with it, we obtain the required inequality $n\geq 2^{k-1}+1$.
\endproof

\begin{lemma}\label{lem:k -> k+l}
a) Let $n\geq k\geq 4$, $0\leq i\leq k-4$. Let an enlarged communication scheme $G$ for $n$ persons be obtained by appending $i$ preliminary calls to scheme $G'$, where graph $G'$ is a tree on $n$ vertices. If $G$ is a $k$-informing scheme, then
$$
n\geq t_{i-1}(k)=2^{k-i-1}+i-1.
$$

b) Let $n\geq k\geq 4$, $0\leq i\leq k-4$. Let an enlarged communication scheme $G$ for $n$ persons be obtained by appending $i$ preliminary calls to scheme $G'$, where graph $G'$ is a tree. If scheme $G$ makes all vertices of graph $G'$ $k$-informed then $n\geq t_{i-1}(k)$.
\end{lemma}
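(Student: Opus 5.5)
Statement b) says all vertices of $G'$ are $k$-informed, while preliminary calls may involve persons outside $G'$. So I first derive b) from a), and then prove a) by induction.

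\emph{Reduction of b) to a).} Let $m$ be the number of vertices of $G'$. Persons outside $G'$ take part only in preliminary calls. Pick a preliminary call $XY$ with $Y\notin G'$. If $X\notin G'$ too, the call contributes nothing to vertices of $G'$; discard it and decrease $i$, which only strengthens the claim since $t_{i-1}$ decreases in the index. If $X\in G'$, then $Y$ can only raise $X$'s awareness by one gossip. Such calls reduce to the situation of a) with fewer preliminary calls on the $m$ vertices of $G'$, so the awareness target drops. We then count $n\geq m+(\text{number of outside persons})$ and check that the loss is compensated. I expect this to work because $t_{i-1}(k)-t_{i-2}(k)$ is roughly $2^{k-i-1}$ in the wrong direction, so a careful count is needed. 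The alternative is to run the same induction as below directly in the setting of b). I will in fact prove a) and b) simultaneously by that induction.

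\emph{Induction for a).} The base case $i=0$ is Lemma~\ref{lem:1}\,a). For the step, remove the first base call $AB$ of $G'$; $G'$ splits into subtrees $\mathcal A$ and $\mathcal B$ with $n_A+n_B=n$. Every vertex of $\mathcal A$ other than $A$ loses at most one gossip-source from the call $AB$, but it keeps whatever arrives through preliminary calls. Split the $i$ preliminary calls into those with both ends in $\mathcal A$ ($i_A$ of them), those with both ends in $\mathcal B$ ($i_B$), and crossing calls ($c$), so $i_A+i_B+c=i$.

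In the scheme restricted to $\mathcal A$, the crossing calls are viewed as calls to outside persons. Each vertex of $\mathcal A$ then becomes $(k-1)$-informed by the same argument as Lemma~\ref{lem:1}\,a): before the call $AB$, $A$ knows only what it learned inside $\mathcal A$ plus crossing gossips. This is exactly why b) is needed. Applying the induction hypothesis (the b) form) to both sides gives
$$
n\geq t_{i_A+c-1}(k-1)+t_{i_B+c-1}(k-1).
$$
We then minimize over the splitting, using $2t_j-j>t_{j-1}$ and monotonicity. When one side receives all $i$ calls, the bound is $t_{i-1}(k-1)+2^{k-2}=2^{k-i-2}+i-1+2^{k-2}\geq 2^{k-i-1}+i-1$ because $i\geq1$. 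When the calls are split, each term is at least $2^{k-2-i}$ and the $i$-parts add up.

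\emph{Main obstacle.} A preliminary call can deliver to $\mathcal A$ a gossip that would otherwise come through $AB$. Then a vertex of $\mathcal A$ may still be $k$-informed, not merely $(k-1)$-informed, in the restricted scheme, and the side bound above fails; this is the very error pointed out in the comment after Lemma~\ref{lem:1}. To handle it, I would use Lemma~\ref{lem:prirost_osved} and an analysis of exact informing as in Lemma~\ref{lem:k -> k+l exact}. Either some subtree must already be $(k-1)$-informing without preliminary calls, which costs $2^{k-2}$ vertices, or every preliminary call raises awareness by exactly one. In the latter case Lemma~\ref{lem:k -> k+l exact} applies with $\ell=i$ to a $(k-i)$-informing tree and yields $n\geq 2^{k-i-1}+i-1$ directly. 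The restriction $i\leq k-4$ ensures $k-i\geq4$, so Lemma~\ref{lem:1}\,c) and the hypothesis $k\geq3$ of Lemma~\ref{lem:k -> k+l exact} are available.
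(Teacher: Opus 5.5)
There is a genuine gap: neither of your two routes reaches the bound.

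\emph{The split at $AB$.} Your simultaneous induction deletes the first base call $AB$, and it breaks at the claim that every vertex of $\mathcal A$ is $(k-1)$-informed in the restricted scheme. At the moment of the call $AB$, person $B$ carries not only gossip $B$ but everything he collected in the preliminary calls inside $\mathcal B$, and all of it crosses into $\mathcal A$. Once those calls are dropped, $\mathcal A$ can lose several gossips: a single preliminary call $BB'$ with $B'\in\mathcal B$ already costs $\mathcal A$ both $B$ and $B'$. So the real danger is a side that is \emph{less} than $(k-1)$-informed. Your ``main obstacle'' paragraph looks the other way, and surplus information on a side never hurts a lower bound.

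\emph{The double count.} Part b) bounds the number of \emph{all} persons, outsiders included. Applied to $\mathcal A$, it bounds $n_A$ plus the $\mathcal B$-partners of crossing calls, and symmetrically for $\mathcal B$. Adding the two bounds double-counts these partners, so it does not give $n\geq t_{i_A+c-1}(k-1)+t_{i_B+c-1}(k-1)$.

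\emph{The fallback.} It does not close the gap either. Lemma~\ref{lem:k -> k+l exact} requires the base tree to be \emph{exactly} $(k-i)$-informing. A person may know more than $k-i$ messages in $G'$ alone, whatever each preliminary call adds, and such a surplus does not produce a $(k-1)$-informing subtree. So your dichotomy is unproved and does not lead to the exact case. In your sketch for b), discarding a call between two outsiders is also not harmless, since its gossips can enter $G'$ through a later call.

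The paper never splits the tree. For a), Lemma~\ref{lem:prirost_osved} shows that $G'$ is $(k-i)$-informing, and a descent on $n$ removes the surplus. A person $A$ who learns $k-i+s$ messages ($s\geq 1$) in $G'$ is glued to the partner $B$ of his last call, and the call(s) $AB$ are cancelled. If $AB$ was also a preliminary call, one gets $n-1$ persons, $i-1$ preliminary calls and awareness $k-1$, and $t_{i-2}(k-1)=t_{i-1}(k)-1$ closes the step. Once $G'$ is exact, Lemma~\ref{lem:k -> k+l exact} with $\ell=i$ and $k-i$ in place of $k$ gives $n\geq 2^{k-i-1}+i-1=t_{i-1}(k)$ directly.

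If you write this out, check the glued person, a check the paper's justification skips. He knows one gossip fewer than $A$ did. So if $AB$ is also $B$'s last call and $A$ ends with exactly $k$ messages, the glued person ends with $k-1$. That case needs a suitable choice of $A$ or an extra argument.

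For b), the ``careful count'' you anticipate is an exact identity. Suppose the component of $G$ containing $G'$ has $i'$ vertices outside $G'$, and $\ell$ preliminary calls join vertices of $G'$. Then $i'+\ell\leq i$, part a) is applied to $G'$ with awareness $k-i'$, and $i'+t_{\ell-1}(k-i')=t_{i'+\ell-1}(k)\geq t_{i-1}(k)$. You compared $t_{i-1}(k)$ with $t_{i-2}(k)$ at fixed $k$. The right bookkeeping is that each outsider costs one unit of awareness but adds one vertex, and $t_j(k-1)+1=t_{j+1}(k)$.
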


The difference between the parts of the lemma is the following: in part (a), the preliminary $i$ calls are made between vertices from $G'$, while in part (b) these calls may involve ``outsiders'' (the parameter $n$ in this case denotes the total number of all vertices, not just those belonging to the tree $G'$). For clarity, we emphasize the distinction between the properties: ``graph $G'$ is $k$-informing'' (the required awareness is achieved solely through calls within $G'$) and ``all gossips from $G'$ have become $k$-informed'' (calls may also occur with vertices outside $G'$).

\proof  We prove both parts by descent on the parameter $n$ to the case described in Lemma~\ref{lem:k -> k+l exact}.

a) By Lemma \ref{lem:prirost_osved}, the awareness of persons could not have increased by more than $i$ as a result of the preliminary calls; therefore, the tree $G'$ is $(k-i)$-informing. If some person in the communication scheme $G'$ learns more than $k-i$ messages, we can reduce the size of tree $G'$ as follows.

Suppose that as a result of the calls in tree $G'$, person $A$ learned $k-i+s$ meassages ($s\geq 1$), and let $AB$ be the last call made by $A$. Then person $B$ (as well as those who later learned messages from him) became $(k-i+s)$-informed. We perform a reduction: we identify $A$ and $B$ and replace them with a single person $E$ who knows only one message. Let person $E$ make all calls instead of $A$ and $B$, including any preliminary calls, and we cancel the calls $AB$ (both the main and the preliminary one, if it existed). As a result of the main calls, the awareness of the other persons remains at least $k-i$, and the awareness of person $E$ is at least $k-i+s-1\geq k-i$; thus, all persons remains $(k-i)$-informed.

Let us examine what the awareness the persons have after the reduction in the presence of the preliminary calls.

If $AB$ is not a preliminary call, the awareness of all persons is equal to $k$. Indeed, if any person (including $A$ and $B$) learned a message during a preliminary call, then after the reduction that message still reaches this person. Thus, we obtain a communication scheme described in the lemma's statement, in which the value of the parameter $n$ is 1 less.
%
%

If $AB$ is one of the preliminary calls, the final awareness of the persons may decrease but remains at least $k-1$. Indeed, the persons who previously learned both gossips $A$ and $B$, now know the only gossip $E$. We have obtained a case in which $n-1$ persons achieve awareness $k-1$ by means of $i-1$ preliminary calls. The statement of the lemma for this case implies that 
$$
n-1\geq t_{i-2}(k-1)= 2^{k-1-(i-2)-2} +i-2 =t_{i-1}(k)-1,
$$
as we need.

b) Let the tree $G'$ have $j$ edges and $j+1$ vertices. Assume that $\ell$ preliminary calls were made between vertices of $G'$ ($1\leq\ell\leq i$). Consider the connected component of the graph $G$ that contains~$G'$. Since we are proving a lower bound on the number of vertices, we can assume that this component coincides with $G$.

Suppose this component contains $i'$ vertices outside $G'$. We can assume that $i'\ne 0$, otherwise we are dealing with part (a). Then this component contains edges of $G'$ and at least $i'$ additional edges. These edges correspond to preliminary calls, hence $i'+\ell\leq i$. Let us remove from scheme~$G$ these $i'$ calls and all the calls between vertices outside $G'$. We obtain a communication scheme on vertices of $G'$, it has $\ell$ preliminary calls between vertices of graph $G'$, then $j$ calls from scheme~$G'$, and as a result all $j+1$ vertices in graph $G'$ became $(k-i')$-informed. By the part (a) the inequality $j+1\geq t_{\ell-1}(k-i')$ holds and therefore
\begin{align*}
n\geq i'+j+1\geq i'+t_{\ell-1}(k-i') &=i'+2^{k-i'-(\ell-1)-2} + \ell-1
=\\&=
2^{k-(i'+\ell-1)-2}+ (i'+\ell-1) =t_{i'+\ell-1}(k)\geq t_{i-1}(k).
\end{align*}

\kern-\baselineskip\endproof

\begin{lemma}\label{lem:kk2}
Let $n\geq k\geq 4$, $0\leq i\leq k-4$, $j\geq 1$. Let communication scheme $G'$ contain $j$ calls. Let enlarged communication scheme $G$ on $n$ vertices be obtained by appending $i$ preliminary calls to $G'$.

a) If graph $G'$ is a tree, and all its vertices, except one vertex $Z$ are $k$-informed after the calls of scheme $G$, then $n\geq t_{i}(k)$.

b) If graph $G'$ is unicyclic, and all its vertices are $k$-informed after the calls of scheme $G$, then $n\geq t_i(k)$.
\end{lemma}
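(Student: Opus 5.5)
Both parts reduce to Lemma~\ref{lem:k -> k+l}, with the target $t_i(k)=2^{k-i-2}+i$ instead of $t_{i-1}(k)=2^{k-i-1}+i-1$. The mechanism is to remove the first base call of $G'$ and charge the smaller defect to a fragment. We argue by induction on $k$ (with $i$ free), treating the base cases $i=0$ directly. For $i=0$, part a) is Lemma~\ref{lem:1}b with $k'$ replaced by the awareness of $Z$, which is at least $1$; this gives $n\geq 2^{k-2}+1\geq t_0(k)$ as soon as $Z$ is at least $2$-informed. Part b) with $i=0$ is Lemma~\ref{lem:1}c.

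\textbf{Part a).} Let $UV$ be the first call of $G'$. Removing it splits the tree $G'$ into $\mathcal U\ni U$ and $\mathcal V\ni V$, and we may assume $Z\in\mathcal V$. The key observation is that every vertex of $\mathcal U$ other than $U$ receives the information of the call $UV$ only through $U$. Every gossip that passed through $UV$ is therefore carried by $U$'s state after that call, and we can contract $UV$ as in the reduction of Lemma~\ref{lem:k -> k+l}. Instead of contracting, we can simply observe that the calls of $G$ restricted to the vertices of $\mathcal U$, together with the preliminary calls, make every vertex of $\mathcal U$ at least $(k-1)$-informed. Indeed, the call $UV$ contributes at most one ``block'' of messages, so by the argument of Lemma~\ref{lem:prirost_osved} it contributes at most one extra unit of awareness to the vertices downstream of $U$.

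Suppose $i_1$ of the preliminary calls are used by the component containing $\mathcal U$. Lemma~\ref{lem:k -> k+l}b then gives $|\mathcal U|+(\text{outsiders})\geq t_{i_1-1}(k-1)=t_{i_1}(k)-1$. The remaining $i-i_1$ preliminary calls, with $\mathcal V$, $Z$ and the vertex set disjoint from the first count, should contribute at least one further vertex: $Z$ itself, or $V$. Summing gives $n\geq t_{i_1}(k)\geq t_i(k)$, using that the sequence $t$ is decreasing. The delicate point is to keep the vertex sets disjoint when preliminary calls join $\mathcal U$ and $\mathcal V$. In that case I would instead count those calls as outsiders in the sense of Lemma~\ref{lem:k -> k+l}b, whose proof already absorbs an exchange of $i'$ outsiders against a drop in $k$ by $i'$, using the identity $i'+t_{\ell-1}(k-i')=t_{i'+\ell-1}(k)$.

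\textbf{Part b).} Take the first call $UV$ of the unicyclic $G'$. If it lies on the cycle, removing it leaves a tree $T$, which is $(k-1)$-informed apart from the one effect of the edge $UV$. I would then apply part a) with the distinguished vertex $Z$ chosen as $U$ or $V$ and awareness shifted down by one; with the index bookkeeping $t_i(k-1)+1\geq t_i(k)$ this needs checking against the formula, or else Lemma~\ref{lem:k -> k+l}b. If $UV$ is a bridge, one of the two sides is a tree that is $(k-1)$-informing up to preliminary calls, and Lemma~\ref{lem:k -> k+l}b gives at least $t_{i_1-1}(k-1)=t_{i_1}(k)-1$ vertices on that side. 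The other side, which contains the cycle, contributes at least one vertex.

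The main obstacle throughout is the disjointness and accounting of preliminary calls that straddle the cut $UV$. I expect to need a case split on whether some preliminary call joins the two sides, mirroring the case analysis in Lemma~\ref{lem:k -> k+l exact}. When such a call exists, I would reroute it through the reduction (identification of endpoints) used in Lemma~\ref{lem:k -> k+l}a, which lowers $n$ and $i$ together and preserves the target $t_i(k)$ up to the shift $t_{i-1}(k-1)=t_i(k)-1$.
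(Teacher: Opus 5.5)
There is a genuine gap in the central step of part a). You claim that after deleting the first base call $UV$, the preliminary calls together with the base calls inside $\mathcal U$ still make every vertex of $\mathcal U$ $(k-1)$-informed, because ``$UV$ contributes at most one block''. This is false. Lemma~\ref{lem:prirost_osved} bounds the effect of calls placed before the whole scheme. The call $UV$, however, comes after the preliminary calls and hands $U$ everything $V$ has accumulated in them, which can be up to $i+1$ gossips.

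A concrete counterexample is Example~2 with the closing call $y_1A$ omitted. It is an instance of part a) with exceptional vertex $A$, and its first base call is $Ay_1$. At that moment $A$ already knows the $i+1$ gossips of $X\cup\{A\}$, so once this call is deleted the persons of $Y\setminus\{y_1\}$ end with only $k-i-1$ messages. In the same example the preliminary calls $xA$, $x\in X$, attach $V=A$ to the component of $\mathcal U$, because $y_1$ later calls everyone in $X$. So your ``one further vertex'' is already counted. You flag this disjointness problem but leave it unresolved.

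Your fallback for part b) fails numerically as well. For $i\leq k-4$ we have $t_i(k)-t_i(k-1)-1=2^{k-i-3}-1\geq 1$, so losing one unit of awareness and gaining back one vertex can never reach $t_i(k)$.

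The missing idea is not to delete $UV$ at all. As the first base call it already comes after all $i$ preliminary calls and before every other base call, so you can declare it an $(i+1)$-st preliminary call without changing the scheme. Then choose the tree to which Lemma~\ref{lem:k -> k+l}b is applied:
\begin{itemize}
\item in a), the component of $G'-UV$ not containing $Z$ (base calls in the other component cannot influence it);
\item in b), $G'-UV$ itself if $UV$ lies on the cycle, and the tree component of $G'-UV$ if $UV$ is a bridge.
\end{itemize}
All vertices of the chosen tree are $k$-informed, so Lemma~\ref{lem:k -> k+l}b with $i+1$ preliminary calls gives $n\geq t_i(k)$. For $i=k-4$ the bound $t_{k-4}(k)=k\leq n$ is trivial.

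The paper proves a) by a different route. It glues $Z$ with the partner $A$ of $Z$'s last base call. Identifying two persons lowers every awareness by at most one, whatever the preliminary calls were. The glued person inherits $A$'s $k$ messages, so the exceptional vertex disappears and Lemma~\ref{lem:k -> k+l}b applies to a fully $(k-1)$-informed tree on $n-1$ persons, giving $n-1\geq t_{i-1}(k-1)=t_i(k)-1$. In b) the paper uses the same reclassification trick for the first call of the cycle, in the case where it is the first call of both its endpoints.
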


\proof
a) Let $ZA$ be the last call of person $Z$ in graph $G'$. Immediately after this call, persons $A$ and $Z$ know the same set of messages, but later, $A$ will make additional calls that increase his awareness. As in the proof of Lemma~\ref{lem:k -> k+l}, we glue persons $A$ and $Z$ in graph $G'$, replacing them with single person $E$ who knows one message at the beginning. Let $E$ make all calls instead of $A$ and $Z$, including any preliminary calls, and cancel the calls $ZA$ (both the main call and the preliminary one, if it existed). Then the resulting graph $\widetilde{G}'$ is a $(k-1)$-informing tree.

If none of the preliminary calls is $ZA$, we obtain a communication scheme for $n-1$ people, with $i$ preliminary calls and $j-1$ main calls, such that all the persons from tree $\widetilde{G}'$ has become $(k-1)$-informed. By Lemma~\ref{lem:k -> k+l}b the inequality
$$
n-1\geq t_{i-1}(k-1) = 2^{k-i-2}+i-1 = t_i(k)-1
$$
holds, which is exactly what we need. If one of the preliminary calls is $ZA$, then this scheme contains $i-1$ preliminary calls, and by Lemma~\ref{lem:k -> k+l}b the inequality
$$
n-1\geq t_{i-2}(k-1) = 2^{k-i-1}+i-2 = t_{i-1}(k)-1>  t_i(k)-1.
$$
holds.

b) Let $AB$ be the first call in the cycle of graph $G'$. Consider a new graph on the vertices of $G'$ whose edges are the calls from $G'$ that occurred before the call $AB$. Let $\mathcal A$ be the connected component of this graph that contains vertex $A$. If tree $\mathcal A$ contains more than one vertex, then all its vertices except $A$ are $k$-informed and the required inequality follows from part (a).

Otherwise, $AB$ is the first call of person $A$. Similarly, we can assume that it is the first call of person $B$. Let us consider this call as preliminary, then we obtain a new communication scheme for $n$ people, with $i+1$ preliminary calls and $j-1$ main call, sush that the persons from the tree $G'\setminus\{AB\}$ has become $k$-informed. By Lemma~\ref{lem:k -> k+l}b, the required inequality $n\geq t_{i}(k)$ holds.
\endproof


Observe that for any call sequence, the following permutation rule holds: if $c_1$, \dots, $c_m$, $c'_1$, \dots, $c'_{\ell}$ are consecutive calls in our sequence and the sets of participants in calls $c_1$, \dots, $c_m$ are disjoint from the sets of participants in calls $c'_1$, \dots, $c'_{\ell}$, then we can swap these two groups of calls:
$$
c_1, \dots, c_m, c'_1, \dots, c'_{\ell} \quad\longrightarrow\quad
c'_1, \dots, c'_{\ell}, c_1, \dots, c_m.
$$
The obtained call sequence has the same communication graph and the same outcome in gossip spreading as the original call sequence. Sequences obtained by such an operation are called \emph{equivalent}.

In the following main lemma we say a person is \emph{informed} (or \emph{uninformed}) if, at the moment in question, he already knows $k$ messages (knows less than $k$ messages).

\begin{lemma}\label{lem:2}
Let $n\leq t_{i-1}(k)-1$, where $0\leq i\leq k-4$.
Let $c_1, \ldots, c_{i+j}$ be a sequence of $i+j$ calls made within a group of $n$ persons, where $1\leq j\leq n$. Then:

1) As a result of these calls, at most $j$ persons become informed.

2) If exactly $j$ persons become informed as a result of these calls, then there exists an equivalent sequence $c_1',\dots, c_{i+j}'$ such that the graph of the last $j$ calls $c_{i+1}',c_{i+2}',\ldots, c_{i+j}'$ is a union of connected components of the following forms:

    (i) a unicyclic graph in which all vertices are informed persons;

    (ii) a tree containing exactly one uninformed person, with all other vertices being informed persons.
\end{lemma}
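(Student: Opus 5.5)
Assume $n\le t_{i-1}(k)-1$ and fix a sequence $c_1,\dots,c_{i+j}$. I would prove both parts together by looking at the final communication graph $H$ of all $i+j$ calls, split into its connected components $H_1,\dots,H_r$. Informedness is decided inside a component, so it suffices to count informed persons per component. Call a component \emph{rich} if the number of informed persons in it is at least the number of its edges. The aim is to show that every component is poor, apart from a controlled exception. For this I would charge the $i$ ``extra'' calls to components. If component $H_s$ has $v_s$ vertices and $e_s$ edges, write $e_s=v_s-1+d_s$, where $d_s\ge 0$ is its excess over a tree. Then $\sum_s e_s=i+j$.

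\textbf{Part 1: each component informs few persons.} The key claim is that a component with $e_s$ edges informs at most $e_s-i_s$ persons, where $i_s\ge 0$ are shares with $\sum_s i_s=i$ (equivalently $\sum_s e_s\ge i+\#\text{informed}$). Inside a component, choose a spanning tree $T_s$ whose edges are the \emph{last} call(s) needed. More precisely, I would use the permutation rule to reorder the calls of $H_s$ so that the extra $d_s$ calls come first, while the last $v_s-1$ calls form a tree. This is possible after replacing calls whose effect is subsumed. If that reordering is not literally possible, I would instead argue through the enlarged-scheme framework: view $H_s$ as a tree $G'$ together with $d_s$ earlier (preliminary) calls.

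I would then split into cases.
\begin{itemize}
\item If all $v_s$ vertices are informed and $d_s\le k-4$, Lemma~\ref{lem:k -> k+l}b gives $v_s\ge t_{d_s-1}(k)$. Summing $v_s\le n<t_{i-1}(k)$ against the monotone decrease of $t$ forces $d_s\ge i+1$, more than the total budget allows. So a component with all vertices informed must have $d_s\ge 1$ and must consume at least its share.
\item If exactly $v_s-1$ vertices are informed, Lemma~\ref{lem:kk2}a shows $v_s\ge t_{d_s}(k)$, which is affordable only with matching excess.
\item If at most $v_s-2$ vertices are informed, we get $\#\text{informed}\le v_s-2<e_s-d_s$, which is strict slack.
\end{itemize}
Summing, and using that at most one component can use $d_s$ of size close to $i$ (since $n<t_{i-1}$ and $2t_i-i>t_{i-1}$ rule out two large informed components), gives $\#\text{informed}\le\sum_s(e_s)-i=j$.

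\textbf{Part 2: equality.} Equality forces every inequality above to be tight, so every component is either a tree with exactly one uninformed vertex or a fully informed graph whose excess is exactly charged. For the fully informed components the bound must reduce to the unicyclic case. Here Lemma~\ref{lem:kk2}b ($n\ge t_i$) and Lemma~\ref{lem:1}c ($n\ge 2^{k-2}$) are the tools that identify the structure. I would then apply the permutation rule: the $i$ surplus calls (one per excess beyond the final tree or unicyclic core in each component) involve groups of participants that can be moved to the front, because calls in different components commute and within a component the preliminary calls precede the core. The last $j$ calls then form components of types (i) or (ii).

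\textbf{Main obstacle.} The hard step is the reordering inside a single component: showing that an equivalent sequence exists in which the calls outside a spanning tree (or unicyclic core) precede the core. Calls in the same component need not commute. I would try induction on $i+j$, removing the first call $c_1$. Either $c_1$ is a bridge in $H$ whose removal keeps the count, and it gets absorbed into a core. Or $c_1$ is treated as a preliminary call, and the induction hypothesis is applied with $i-1$ (the case $n\le t_{i-2}$ is weaker, so it holds). This mirrors the ``consider $AB$ as preliminary'' trick in Lemma~\ref{lem:kk2}b.
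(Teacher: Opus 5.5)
Your proposal has a genuine gap, in two places.

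\textbf{The component lemmas do not apply.} The bounds you rely on (Lemma~\ref{lem:k -> k+l}b for fully informed components, Lemma~\ref{lem:kk2}a for components with one uninformed person) hold only for schemes of the form ``some preliminary calls, then all calls of a tree''. An arbitrary component of the final graph need not have that form, under any equivalent reordering. For instance, $ab,bc,cd,bc$ admits no nontrivial equivalent sequence, since consecutive calls always share a participant. Its last three calls never touch $a$, and it is not a tree preceded by one earlier call.

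Your fallback does not repair this; it loses exactly one unit. Invoking the statement for $i-1$ with $c_1$ as a preliminary call bounds the informed persons after $(i-1)+(j+1)$ calls by $j+1$, not $j$. Deleting $c_1$ instead does not help either: by Lemma~\ref{lem:prirost_osved}, restoring $c_1$ can raise every awareness by one, so a count for $c_2,\dots,c_{i+j}$ does not transfer.

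\textbf{The count does not close.} Even granting those bounds, your third case gives a component with $m\ge 2$ uninformed persons a slack of only $1$, whereas the lemma needs total slack $i$. Take a single tree component on $v$ vertices with exactly two uninformed persons and $i\ge 2$. Then $i+j=v-1$, and your estimate permits $v-2=i+j-1$ informed persons, while the lemma asserts at most $j$. Excluding this requires something like $v\ge t_{d+m-1}(k)$ for a component with excess $d$ and $m$ uninformed persons, for every $m$. That is essentially the theorem itself, and Lemmas~\ref{lem:k -> k+l} and~\ref{lem:kk2} do not provide it.

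Separately, $d_s\ge i+1$ exceeds no budget, because $\sum_s d_s$ is not bounded by $i$. Example~1 with $j=n$ is a single fully informed component with excess $i+1$, which is simply type (i).

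\textbf{The missing idea.} The paper inducts on $j$ with $i$ fixed, carrying statement 2) along in the hypothesis. The first $i$ calls serve as the preliminary calls for the graph $G'$ of the last $j$ calls, so no component of the whole graph ever has to be decomposed. The base $j\le k-i-2$ is trivial.

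For 1), the last call adds at most two informed persons. So a violation means exactly $j-1$ informed persons before it and two new ones, $x_j$ and $x_{j+1}$. By 2) for $j-1$, the last $j-1$ calls (after reordering) form components of types (i) and (ii). There are two cases:
\begin{itemize}
\item If neither $x_j$ nor $x_{j+1}$ is the uninformed vertex of a type (ii) tree, neither occurs in those calls. Then the last call commutes to position $i+1$, and $i+1$ calls inform nobody.
\item Otherwise the last call creates a tree component of $G'$ with all vertices informed. Lemma~\ref{lem:k -> k+l}b forbids this, since $n<t_{i-1}(k)$.
\end{itemize}

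For 2), take the minimal $p$ such that $c_{i+p+1},\dots,c_{i+j}$ join only informed persons. If $p=0$, all components are unicyclic. If $p\ge 1$, split off the component $C$ of $c_{i+p}$, which is a tree with one uninformed person. Apply the hypothesis to the remaining calls and reattach $C$.

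This way Lemma~\ref{lem:k -> k+l}b is only ever applied to a genuine ``$i$ preliminary calls plus a tree'' configuration.
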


Note that each of these components satisfies Lemma \ref{lem:kk2} and therefore contains ``sufficiently many'' vertices. Hence, under the constraint $n\leq t_{i-1}(k)-1$, the number of such components cannot be large. It seems that only the following cases are possible: either there is a single component (of any of the specified types) or two components (both unicyclic, or one unicyclic and the other a tree). However, we will not need to discuss these details.

\proof Let $G'$ denote the subgraph of the call graph formed by the last $j$ calls.

Observation: the graph $G'$ cannot contain a connected component that is a tree in which all persons are informed. Indeed, in this case, the call scheme consisting of the $i$ preliminary calls and the calls in that component satisfies Lemma \ref{lem:k -> k+l}, and therefore $n\geq t_{i-1}(k)$, which is false.

We prove the statement of the lemma by induction on $j$.

Base case: $1\leq j\leq k-i-2$. Then $i+j\leq k-2$, meaning any connected component of the call graph contains at most $k-1$ vertices; hence, no person has become informed. Both statements turn out to be true but trivial, with the second statement being true ``due to a false premise'' --- which, however, will not hinder reducing the inductive step to it.

Step of induction. Let $j\geq k-i-1$, and for $j'=j-1$ the inductive hypothesis holds.

\medskip
Let us prove statement 1). Suppose that as a result of a sequence of $i+j$ calls, there are $j+1$ informed persons.
Since the last call $c_{i+j}$ produces at most two informed persons, by the time call $c_{i+j-1}$ was completed there were exactly $j-1$ informed persons $x_1$, \dots, $x_{j-1}$ (already knowing $k$ messages each), and the last call took place between the ``new'' informed persons $x_j$ and $x_{j+1}$. By the inductive hypothesis for statement 2), we may assume that all calls $c_{i+1}$, \dots, $c_{i+j-1}$ were between persons $x_1$, \dots, $x_{j-1}$ and, possibly, between persons from the set $\mathcal A$ consisting of persons who did not become informed as a result of those $j-1$ calls. Moreover, each connected component of the communication graph containing $\mathcal A$ is of type (ii).

If $x_{j}\notin \mathcal A$ and $x_{j+1}\notin \mathcal A$ (in particular, if $\mathcal A=\varnothing$), then by swapping this group of calls with $c_{i+j}$, we obtain two informed persons as a result of the $i+1$ calls $c_1$, $c_2$, \dots, $c_i$, $c_{i+j}$, which is impossible by the base of the induction.

If $x_{j}\in \mathcal A$ or $x_{j+1}\in \mathcal A$, then we will show that this case is impossible too.

Indeed, if $x_j\in\mathcal A$ and $x_{j+1}\in\mathcal A$, then after the $j-1$ calls, by the inductive hypothesis, these (still uninformed) persons belong to components of type (ii). Adding an edge between them (as a result of which they become informed) produces a component that is a tree in which all vertices are informed. This is impossible by the observation at the beginning of the proof.

If only one of the persons $x_j$, $x_{j+1}$ belongs to $\mathcal A$, say $x_{j}\in \mathcal A$, then after the $j-1$ calls person $x_j$ was in a component of type (ii), while $x_{j+1}$ did not belong to the graph of those $j-1$ calls. In this case, the call $c_{i+j}$ creates a component that is a tree with all persons informed, which is again impossible.

Thus, the assumption made at the beginning of the argument is false.

\medskip

Let us prove statement 2). Suppose that after $i+j$ calls we have exactly $j$ informed persons. Since $i\leq k-4$, no informed persons can appear as a result of calls $c_1$, $c_2$, \dots, $c_i$. Therefore, every informed person participated in at least one of the calls $c_{i+1}$, \dots, $c_{i+j}$. Clearly, if a call involved a person who already knew $k$ messages or if such a person appeared as a result of this call, then both participants of that call are informed. By the permutation rule, we may assume that the last such call is $c_{i+j}$. Then choose the minimal $p\geq 0$ such that the last calls $c_{i+p+1}$,~\dots, $c_{i+j}$ were made only between informed persons.

If $p=0$, then graph $G'$ has $j$ vertices and $j$ edges, with at least one edge incident to each vertex. By the observation made at the beginning of the proof, no connected component of $G'$ can be a tree. Hence, $G'$ is a union of unicyclic components, which satisfies our requirements (see Fig.~\ref{ris:2unicycle}).

\begin{figure}[h]
\footnotesize\setlength{\unitlength}{1mm}
\begin{minipage}[b]{.45\linewidth}
\begin{center}
\begin{picture}(65,35)(20,-20)
\multiput(60,0)(20,0){2}{\multiput(0,0)(0,-20){2}{\circle*{1}}}
\matrixput(64,8)(20,0){2}(0,-20){2}{\circle*{1}}
\linethickness{.4pt}
\matrixput(60,0)(20,0){2}(0,-20){2}{\put(0,0){\line(1,2){4}}}
\linethickness{1pt}
\multiput(60,0)(0,-20){2}{\put(0,0){\line(1,0){20}}}
\linethickness{2pt}
\multiput(60,0)(20,0){2}{\line(0,-1){20}}
\multiput(20,0)(20,0){2}{\multiput(0,0)(0,-20){2}{\circle*{1}}}
\matrixput(24,8)(20,0){2}(0,-20){2}{\circle*{1}}
\linethickness{.4pt}
\matrixput(20,0)(20,0){2}(0,-20){2}{\put(0,0){\line(1,2){4}}}
\linethickness{1pt}
\multiput(20,0)(0,-20){2}{\put(0,0){\line(1,0){20}}}
\linethickness{2pt}
\multiput(20,0)(20,0){2}{\line(0,-1){20}}
\linethickness{.4pt}
\dashline{1}(40,0)(60,0)
\dashline{1}(40,-20)(60,-20)
\dashline{1}(20,0)(80,-20)
\end{picture}
\end{center}
\caption{
Graph $G'$ (solid lies) and $i$ preliminary calls (dash lines), all persons are $k$-informed, $n=16$, $k=8$, $i=3$, $j=16$, $p=0$.}
\label{ris:2unicycle}
\end{minipage}
\hfil
\begin{minipage}[b]{.45\linewidth}
\begin{center}
\begin{picture}(65,37)(20,-20)
\multiput(60,0)(20,0){2}{\multiput(0,0)(0,-20){2}{\circle*{1}}}
\matrixput(64,8)(20,0){2}(0,-20){2}{\circle*{1}}
\linethickness{.4pt}
\matrixput(60,0)(20,0){2}(0,-20){2}{\put(0,0){\line(1,2){4}}}
\linethickness{1pt}
\multiput(60,0)(0,-20){2}{\put(0,0){\line(1,0){20}}}
\linethickness{2pt}
\multiput(60,0)(20,0){1}{\line(0,-1){20}}
\multiput(20,0)(20,0){2}{\multiput(0,0)(0,-20){2}{\circle*{1}}}
\matrixput(24,8)(20,0){2}(0,-20){2}{\circle*{1}}
\put(57,15){\circle*{1}}
\linethickness{.4pt}
\matrixput(20,0)(20,0){2}(0,-20){2}{\put(0,0){\line(1,2){4}}}
\linethickness{1pt}
\multiput(20,0)(0,-20){2}{\put(0,0){\line(1,0){20}}}
\linethickness{2pt}
\multiput(20,0)(20,0){2}{\line(0,-1){20}}
\linethickness{3pt}
\path(60,0)(57,15)
\linethickness{.4pt}
\dashline{1}(40,0)(60,0)
\dashline{1}(40,-20)(60,-20)
\dashline{1}(20,0)(57,15)
\put(69,-3){\oval(36,43)}
\put(58,15){$V$}\put(77,14){$C$}\put(51.5,6){$c_{i+p}$}
\end{picture}
\end{center}
\caption{
Graph $G'$ (solid lies) and $i$ preliminary calls (dash lines), all persons except $V$ are $k$-infor\-med,
$n=17$, $k=8$, $i=3$, $j=16$, $p=1$.}
\label{ris:2unicyc+tree}
\end{minipage}
\end{figure}

Now assume that $p\geq 1$. By the minimality of $p$, the participants of call $c_{i+p}$ cannot both be informed. But they also cannot both be uninformed, because otherwise we could move this call to the end of the call sequence and obtain the same $j$ informed persons after only $i+j-1$ calls, which contradicts the inductive hypothesis. Therefore, exactly one uninformed person participated in call $c_{i+p}$; denote him by $B$.

Let $G_p'$ be the subgraph of $G$ formed by the calls $c_{i+p}$, \dots, $c_{i+j}$. If $p=1$, then $G_p'=G'$ and it has $j+1$ vertices and $j$ edges. If $G_p'$ is a tree, we have obtained the required configuration. Otherwise (when $p>1$ or $G_p'$ consists of two or more components), denote by $C$ the connected component of $G_p'$ that contains call $c_{i+p}$, see Fig.~\ref{ris:2unicyc+tree}. Let us rename the calls: let $c_1'=c_{i+p}$, $c_2'$,~\dots, $c_r'$ be the calls made within component $C$ in chronological order, and let $c_1''$, $c_2''$, \dots, $c_s''$ be the remaining calls in $G_p'$. In both cases under consideration, we have $r<j$ (and the case $s=0$ is possible when $p>1$).

By the permutation rule, all calls $c_1''$,  \dots, $c_s''$ can be performed before the calls $c_1'$,~\dots, $c_r'$; i.e., the original call sequence is equivalent to sequence 
\begin{equation}
\label{eq:zvonki-ish}
c_1, c_2, \dots, c_{i+p-1}, \quad c_1'',  \dots, c_s'', \quad  c_1', \dots, c_r'.
\end{equation}
Since call $c_1'$ involves only one informed person, component $C$ contains at most $r$ informed persons (it has $r$ edges and therefore at most $r+1$ vertices). Hence, after first $i+j-r$ calls from the sequence \eqref{eq:zvonki-ish}, i.e., after $c_1$, $c_2$,~\dots,~$c_s''$, we have at least $j-r$ informed persons. Then, by the inductive hypothesis, the number of informed persons at that moment is exactly $j-r$ (in this case component $C$ is a tree and contains exactly $r$ informed persons). Moreover, we can reorganize the first $i+j-r$ calls in sequence \eqref{eq:zvonki-ish} to obtain call sequence 
\begin{equation}
\label{eq:zvonki-kon}
\tilde{c}_1, \tilde{c}_2, \dots,  \tilde{c}_{i+j-r},  \quad  c_1', \dots, c_r',
\end{equation}
where the calls $\tilde{c}_{i+1}$, \dots,  $\tilde{c}_{i+j-r}$ form a graph whose connected components are all of type (i) or (ii). (Remark that here some person in a component of type (ii) is uninformed in a sense of the inductive hypothesis, i.e., that after $i+j-r$ calls, he did not become $k$-informed.)

Let us examine, what happens when we perform the calls $c_1', \dots, c_r'$ after the calls $\tilde{c}_{i+1}$, \dots, $\tilde{c}_{i+j-r}$.

Observe that if one of the uninformed persons from a component of type (ii) – call him $A$ – becomes informed as a result of calls $c_1', \dots, c_r'$, this means that after appending edges $c_1', \dots, c_r'$, tree $C$ joined with the component containing $A$ (also a tree), and we obtain a tree with only one uninformed person (call him $B$). The same true if $A=B$. In all other cases, the tree $C$ containing one uninformed person becomes a separate component of the call graph.

Thus, when the calls of sequence \eqref{eq:zvonki-kon} are completed, the connected components have the required form.
\endproof

\subsubsection*{Examples}

Let us demonstrate how the persons can achieve their goal by $P(n,k)$ calls.

E\,x\,a\,m\,p\,l\,e \,1 (\cite[Lemma 2]{ChungTsay}).
Choose two disjoint subsets in the set of all persons: a set $X$ consisting of $i$ persons, and a set $Y=\{y_1, y_2, \dots, y_{2^{k-i-2}}\}$ of $2^{k-i-2}$ persons; see Fig.~\ref{ris:chang-tsay-method}. The calls are organized as follows. First, each person from set $X$ communicates to person $y_1$. As a result, $i$~calls are made, and person $y_1$ now knows $i+1$ messages. Next, by an iterative process we begin doubling the number of ``most informed'' persons in set $Y$. The base: make the calls: $y_1y_2$, $y_3y_4$, $y_1y_3$, $y_2y_4$. After these 4 calls, the first 4 people in set $Y$ each know $i+4$ messages. Inductive step. Suppose at the previous step the first $2^r$ people in set $Y$ (where $r=2$, 3, \dots) knew $i+r+2$ messages. Then at the current step we make $2^r$ calls
\begin{equation}
\label{eqn:zvonki}
y_1y_{1+2^r},  \quad y_2y_{2+2^r}, \quad \dots, \quad y_{2^r}y_{2^{r+1}}
\end{equation}
and now the first $2^{r+1}$ persons in set $Y$ each know $i+r+3$ messages. The process finishes when the set $Y$ is ``covered''. By this point, a total of $i+2^{k-i-2}$ calls have been made, and all persons in $Y$ know $k$ messages. After that, let $y_1$ call every person outside $Y$, making them $k$-informed. Thus, $i+2^{k-i-2}+(n-2^{k-i-2})=n+i$ calls are made.

In this reasoning we used only the inequality $t_i\leq n$.

\begin{figure}[t]
\begin{center}
\setlength{\unitlength}{1mm}
\begin{picture}(75,35)(-1,-26)
\multiput(0,0)(10,0){4}{\circle*{1}}
\multiput(5,-15)(12,0){3}{\circle*{1}}
\matrixput(50,0)(20,0){2}(0,-20){2}{\circle*{1}}
\matrixput(54,8)(20,0){2}(0,-20){2}{\circle*{1}}
\linethickness{.4pt}
\qbezier(30,0)(40,5)(50,0)
\qbezier(20,0)(40,8)(50,0)
\qbezier(10,0)(40,11)(50,0)
\qbezier( 0,0)(40,14)(50,0)
\matrixput(50,0)(20,0){2}(0,-20){2}{\put(0,0){\line(1,2){4}}}
\linethickness{1pt}
\multiput(50,0)(0,-20){2}{\put(0,0){\line(1,0){20}}}
\linethickness{2pt}
\multiput(50,0)(20,0){2}{\put(0,0){\line(0,-1){20}}}
\put(-1,-2){$\underbrace{\qquad\qquad\qquad\qquad\quad}_{\text{$i$ vertices}}$}
\put(48,-22){$\underbrace{\qquad\qquad\qquad\quad}_{\text{$2^{k-i-2}$ vertices}}$}
\put(4,-17){$\underbrace{\qquad\qquad\qquad\quad}_{\text{other verices}}$}
\footnotesize
\put(25,10){$X$}\put(63,10){$Y$}
\put(46,-3){$y_1$}
\end{picture}
\end{center}
\caption{\vtop{\hsize=400pt
En example of call scheme for $n=15$, $i=4$, $k=9$. The vertices in group $X$ make the first calls, the next calls are within group $Y$, and afterwards $y_1$ calls everyone in group $X$ and the ``others'' (these calls are not shown in the diagram). In group $Y$, the thicker the edge, the earlier the call occurs; for edges of the same thickness, the order of calls is arbitrary.
}}
\label{ris:chang-tsay-method}
\end{figure}
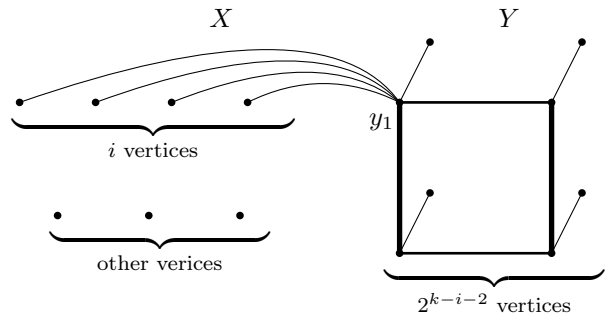

\medskip
E\,x\,a\,m\,p\,l\,e \,2. 
We describe a similar but in some sense more flexible construction for the case $t_i+1\leq n$, such that the graph of the first $n-1$ calls is a tree.

Choose an arbitrary person $A$ and three disjoint subsets: a set $X$ consisting of $i$ persons, a~set $Y=\{y_1, y_2, \dots, y_{2^{k-i-2}}\}$ of $2^{k-i-2}$ persons, and a set $Z$ containing all remaining persons; see Fig.~\ref{ris:kk-method}. The calls are organized as follows.

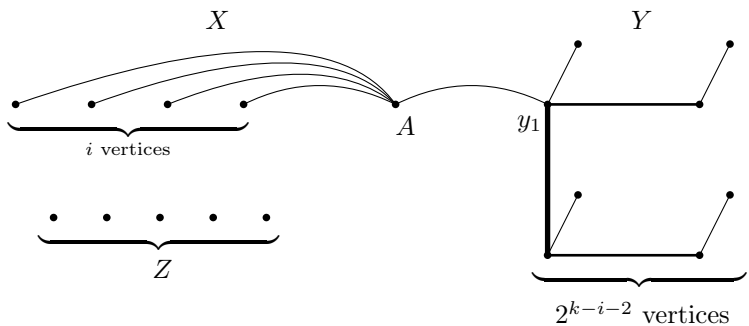
\begin{figure}[h]
\begin{center}
\footnotesize 
\setlength{\unitlength}{1mm}
\begin{picture}(95,35)(-1,-26)
\multiput(0,0)(10,0){4}{\circle*{1}}
\multiput(5,-15)(7,0){5}{\circle*{1}}
\put(50,0){\circle*{1}}
\matrixput(70,0)(20,0){2}(0,-20){2}{\circle*{1}}
\matrixput(74,8)(20,0){2}(0,-20){2}{\circle*{1}}
\linethickness{.4pt}
\qbezier(30,0)(40,5)(50,0)
\qbezier(20,0)(40,8)(50,0)
\qbezier(10,0)(40,11)(50,0)
\qbezier( 0,0)(40,14)(50,0)
\qbezier(50,0)(60,5)(70,0)
\matrixput(70,0)(20,0){2}(0,-20){2}{\put(0,0){\line(1,2){4}}}
\linethickness{1pt}
\multiput(70,0)(0,-20){2}{\put(0,0){\line(1,0){20}}}
\linethickness{2pt}
\put(70,0){\line(0,-1){20}}
\put(-1,-2){$\underbrace{\qquad\qquad\qquad\qquad\quad}_{\text{$i$ vertices}}$}
\put(68,-22){$\underbrace{\qquad\qquad\qquad\qquad}$}
\put(71,-29){$2^{k-i-2}$ vertices}
\put(3,-17){$\underbrace{\qquad\qquad\qquad\qquad\quad}$}
\put(18,-23){$Z$}
\put(25,10){$X$}
\put(81,10){$Y$}
\put(50,-4){$A$}\put(66,-3){$y_1$}

\end{picture}
\end{center}
\caption{\vtop{\hsize=390pt Example of a call scheme for $n=18$, $i=4$, $k=9$. The vertices in group $X$ make the first calls, next call is $Ay_1$, then calls within group $Y$, and afterwards $y_1$ calls everyone in group $X$, $Z$ and person $A$. In group $Y$, the thicker the edge, the earlier the call occurs; for edges of the same thickness, the order of calls is arbitrary.
}}
\label{ris:kk-method}
\end{figure}

First, each person from $X$ communicates his message to person $A$; then $A$ calls $y_1$. As a result, $i+1$ calls are made, and now $A$ and $y_1$ know $i+2$ messages each. Next, by an iterative process (but without the 4-cycle, whose role is taken over by $A$), we begin doubling the number of ``most informed'' persons in $Y$. If at the previous step the first $2^r$ people (where $r=0$, 1, \dots) in set $Y$ knew $i+r+2$ messages, then at the current step we make $2^r$ calls according to scheme \eqref{eqn:zvonki}, and now the first $2^{r+1}$ persons in $Y$ each know $i+r+3$ essages. As in the previous example, once the set $Y$ is ``covered'', every person in it knows $k$ messages. Then $y_1$ calls all the persons in~$Z$. By this point $n-1$ calls have been made, the communication graph now is a tree on $n$ vertices. To complete the gossip spreading, person $y_1$ calls everyone in group $X$ and person $A$ --- this requires an additional $i+1$ calls.

This construction illustrates statement a) of Lemma~\ref{lem:kk2}: the graph $G'$ consists of vertex $A$ together with the entire component $Y$.

\medskip
E\,x\,a\,m\,p\,l\,e \,3. 
The construction of Example 2 admits the following modification. Let us write $Y_1$ instead of $Y$. Once set $Y_1$ is ``covered'' and all persons in it know $k$ messages each, we can try (if the inequality $n< t_{i-1}(k)$ permits) to select from group $Z$ another subset, $Y_2$, consisting of $2^{k-i-3}$ persons; see Fig.~\ref{ris:kk-ext-method}. Person $A$ calls some person $y_2\in Y_2$; as a result, both know $i+3$ messages each. Afterwards, a binary process, analogous to the one in $Y_1$, is launched within $Y_2$. After that (if the inequality $n< t_{i-1}(k)$  still allows), we can select yet another subset, $Y_3$, and so on. Finally, $y_1$ calls the remaining members of group $Z$; the communication graph becomes a tree on $n$ vertices. Then $y_1$ makes an additional $i+1$ calls  --- calling everyone in group $X$ and person $A$. In total, $n+i$ calls are made.

\begin{figure}[h]
\begin{center}
\setlength{\unitlength}{.7mm}
\begin{picture}(135,45)(-1,-26)
\multiput(0,0)(10,0){4}{\circle*{1}}
\put(23,-15){\circle*{1}}
\put(50,0){\circle*{1}}
\matrixput(70,0)(20,0){2}(0,-20){2}{\circle*{1}}
\matrixput(74,8)(20,0){2}(0,-20){2}{\circle*{1}}
\matrixput(115,0)(20,0){2}(0,-20){2}{\circle*{1}}
\linethickness{.4pt}
\qbezier(30,0)(40,5)(50,0)
\qbezier(20,0)(40,8)(50,0)
\qbezier(10,0)(40,11)(50,0)
\qbezier( 0,0)(40,14)(50,0)
\qbezier(50,0)(60,5)(70,0)
\qbezier(50,0)(82,35)(115,0)
\matrixput(70,0)(20,0){2}(0,-20){2}{\put(0,0){\line(1,2){4}}}
\linethickness{1pt}
\multiput(70,0)(0,-20){2}{\put(0,0){\line(1,0){20}}}
\multiput(115,0)(0,-20){2}{\put(0,0){\line(1,0){20}}}
\linethickness{2pt}
\put(70,0){\line(0,-1){20}}
\put(115,0){\line(0,-1){20}}
\put(-1,-2){$\underbrace{\qquad\qquad\qquad\;}_{\text{$i$ vertices}}$}
\put(68,-22){$\underbrace{\qquad\qquad\quad}_{\text{$2^{k-i-2}$ vertices}}$}
\put(113,-22){$\underbrace{\qquad\qquad\quad}_{\text{$2^{k-i-3}$ vertices}}$}
\put(17,-17){$\underbrace{\qquad}$}
{\footnotesize
\put(22,-25){$Z$}
\put(25,10){$X$}
\put(81,10){$Y_1$}\put(126,10){$Y_2$}
\put(50,-4){$A$}\put(64,-3){$y_1$}\put(109,-3){$y_2$}
}

\end{picture}
\end{center}
\caption{\vtop{\hsize=375pt Example of call scheme for $n=18$, $i=4$, $k=9$.
First, we make the calls from the vertices in group $X$, then call $Ay_1$, then the calls within group $Y_1$, then call $Ay_2$, then the calls within group $Y_2$, and finally, $y_1$ calls everyone in groups $X$, $Z$, and person $A$.
}}
\label{ris:kk-ext-method}
\end{figure}
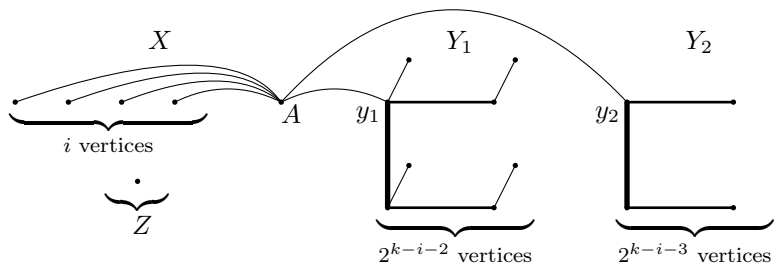

\subsubsection*{Proof of the theorem}

Statement 1 of Theorem \ref{thm:chung-tsay} is proved in \cite{ChungTsay}. Statement 2 follows from Lemmas \ref{lem:1}–\ref{lem:k -> k+l} and \ref{lem:2} (the bound) and from Example 1 (the construction), which we cited from~\cite{ChungTsay}.

\end{document}